\theoremstyle{plain}
\newtheorem{thm}{Theorem}[section]
\newtheorem{prop}[thm]{Proposition}
\newtheorem{lemma}[thm]{Lemma}
\newtheorem{claim}[thm]{Claim}
\theoremstyle{definition}
\newtheorem{question}[thm]{Question}
\newtheorem{rmk}[thm]{Remark}
\newcommand{\E}{\mathbb{E}}
\renewcommand{\P}{\mathbb{P}}
\newcommand{\tr}{\operatorname{Trace}}
\newcounter{bencomments}
\newcounter{jenyacomments}
\title{Simple vs non-simple loops on random regular graphs }  
\author{Benjamin Dozier \thanks{Department of Mathematics, Cornell University, \href{mailto: benjamin.dozier@cornell.edu}{\nolinkurl{benjamin.dozier@cornell.edu}}} \and  Jenya Sapir \thanks{Department of Mathematics, Binghamton University, \href{mailto:sapir@math.binghamton.edu}{\nolinkurl{sapir@math.binghamton.edu}}.} }
\begin{document}
\maketitle

\begin{abstract}
  In this note we solve the ``birthday problem'' for loops on random regular graphs.  Namely, for fixed $d\ge 3$, we prove that on a random $d$-regular graph with $n$ vertices, as $n$ approaches infinity, with high probability: 
  \begin{enumerate}[(i)]
  \item almost all primitive non-backtracking loops of length $k \prec \sqrt{n}$ are simple, i.e. do not self-intersect, 
  \item almost all primitive non-backtracking loops of length $k \succ \sqrt{n}$ self-intersect.  
  \end{enumerate}
\end{abstract}

\section{Introduction}
\label{sec:intro}

Observe that the shortest non-backtracking loop on any regular graph
is simple i.e. passes through each vertex at most once.  As we consider non-backtracking loops of length $k$ getting larger, eventually all of them are non-simple, since there are no simple loops of length greater than $n$, the number of vertices.   At what length $k$ (as a function of $n$) do the non-simple loops start to become more common?  The scale at which the transition happens will depend on the shape of the graph; in this paper we answer the question for the case of \emph{random} regular graphs of fixed degree $d\ge 3$.  Our result is that the transition occurs around the threshold $k=\sqrt{n}$. 

Our interest in this question arose from consideration of the analogous question for hyperbolic surfaces, raised by Lipnowski-Wright in \cite[Conjecture 1.2]{LW}.  

\bigskip 

\paragraph{Terminology.}

We define a \emph{walk} of length $k$ on a (multi)graph to be a sequence of oriented edges $e_1e_2\cdots e_k$ with the terminal vertex of $e_i$ equal to the initial vertex of $e_{i+1}$ for $i=1,\ldots,k-1$.  We say a walk is a \emph{loop} if the terminal vertex of $e_k$ is equal to the initial vertex of $e_1$.  If the edge sequences of two loops differ by a cyclic shift, we consider the two loops to be the same.   We say a walk is \emph{non-backtracking} if $\overline e_i\ne e_{i+1}$ (here $\overline e_i$ denotes $e_i$ with orientation reversed) for $i=1,\ldots,k-1$.   We say a loop is non-backtracking if \emph{all} of the walks associated to that loop are non-backtracking (in particular, when a distinguished start vertex is chosen, we must have  ``non-backtracking at the start'').   A walk is said to be \emph{simple} if the terminal endpoints of the edges are distinct, and a loop is said to be simple if all (equivalently, any) of the associated walks are simple.  We say a loop is \emph{primitive} if it is not the repetition of a shorter loop. 

\subsection{Model of random regular graphs and notation}

Let $G(d,n)$ be a random $d$-regular graph on $n$ vertices, chosen using the \emph{configuration model}.   One samples from this distribution as follows.  Begin with $n$ vertices, each with $d$ unpaired half-edges.  At each stage choose some unpaired half-edge and pair it with a different unpaired half-edge chosen uniformly at random.  Repeat until all half-edges are paired.  The resulting ``graph'' can have self-loops and multiple edges between a pair of vertices; nevertheless we will abuse terminology and call them graphs.  

Given $\Gamma$ a $d$-regular graph let:

\begin{itemize}
\item $N_{simp}(\Gamma,k)$ be the number of \emph{simple}  non-backtracking loops on $\Gamma$ of length $k$,
\item $N_{prim}(\Gamma,k)$ be the number of \emph{primitive} non-backtracking loops on $\Gamma$ of length $k$.   
  \end{itemize}
Let $\P_n[\cdot]$ denote the probability of an event with respect to graphs drawn from $G(d,n)$, and $\E_n[\cdot]$ denote the expected value.  (Note that these quantities depend also on the degree $d$, but we will always think of $d$ as fixed).  We will use the shorthand $\E_n[X(k)] := \E_n[X(\Gamma,k)]$ for $X$ any of the quantities in the list above.

\subsection{Main results}
\label{sec:main-results}

\begin{thm}[Low length regime]
  \label{thm:graph-short}
  Take $d\ge 3$ fixed.  Suppose $k$ is some function of $n$ satisfying $k\prec \sqrt{n}$.  Fix $\epsilon>0$. Then
  $$\P_n\left[ N_{simp}(k)\ge (1-\epsilon) N_{prim}(k)\right] \to 1, $$
  as $n\to\infty$.
\end{thm}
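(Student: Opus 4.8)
The plan is to use the first moment method, comparing $\E_n[N_{prim}(k)]$ and $\E_n[N_{simp}(k)]$, or rather $\E_n[N_{prim}(k) - N_{simp}(k)]$, the expected number of non-simple primitive non-backtracking loops. The key claim to establish is that
\[
  \frac{\E_n\left[N_{prim}(k) - N_{simp}(k)\right]}{\E_n\left[N_{simp}(k)\right]} \to 0
\]
whenever $k \prec \sqrt{n}$. Given this, since $N_{prim}(k) - N_{simp}(k) \ge 0$, Markov's inequality gives that $N_{prim}(k) - N_{simp}(k) \le \delta\, \E_n[N_{simp}(k)]$ with high probability for any fixed $\delta$; and separately a second-moment or concentration argument (or even just a lower-bound-in-probability argument) shows $N_{simp}(k) \ge (1-\delta')\E_n[N_{simp}(k)]$ with high probability. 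Combining these with $N_{prim} = N_{simp} + (N_{prim} - N_{simp})$ yields $N_{simp}(k) \ge (1-\epsilon)N_{prim}(k)$ with high probability, as desired. (If establishing the lower tail of $N_{simp}$ is awkward, one can instead run the whole argument through the ratio $N_{simp}/N_{prim}$ directly, bounding $\E_n[N_{prim} - N_{simp}]$ from above and $\E_n[N_{prim}]$ from below and using that $N_{simp}/N_{prim} \ge 1 - (N_{prim}-N_{simp})/N_{prim}$.)

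First I would compute $\E_n[N_{simp}(k)]$. A simple non-backtracking loop of length $k$ is determined by an ordered choice of $k$ distinct vertices forming a cycle (up to the dihedral symmetry of the cycle), times the number of ways to realize each consecutive step by a half-edge pairing. In the configuration model, the probability that a prescribed sequence of $k$ half-edge pairings all occur is $\prod_{i=0}^{k-1}\frac{1}{dn - 2i - 1}$, and there are roughly $n^k \cdot d(d-1)^{k-1} \cdot d$ relevant decorated sequences (choice of vertices and of the half-edges at each vertex, with the non-backtracking constraint costing a factor $(d-1)/d$ at each internal step), divided by $2k$ for the dihedral symmetry. Since $k \prec \sqrt n$ we have $k^2/n \to 0$, so $\prod_{i=0}^{k-1}\frac{dn}{dn-2i-1} \to 1$, and one gets $\E_n[N_{simp}(k)] \sim \frac{(d-1)^k}{2k}$ (up to lower-order corrections; one must be mildly careful about very small $k$, where the $2k$ and the self-loop/multi-edge conventions matter, but that regime is harmless).

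Next I would bound $\E_n[N_{prim}(k) - N_{simp}(k)]$ from above by $\E_n[N_{ns}(k)]$, where $N_{ns}(k)$ counts \emph{all} non-backtracking loops of length $k$ (primitive or not) that visit some vertex at least twice. The standard way to organize this count is by the combinatorial ``shape'': a non-simple loop of length $k$ traces out a graph with at most $k-1$ vertices, and such a loop has at least one vertex of degree $\ge 3$ in its image (or revisits in a way that creates a cycle). Grouping loops by how many vertices they actually visit — say $k - j$ distinct vertices for some $j \ge 1$ — the number of decorated walks of this type is $O(n^{k-j})$ times combinatorial factors polynomial in $k$ and bounded in terms of $d$, while the half-edge pairing probability is $\Theta((dn)^{-k})$. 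Hence the contribution of loops visiting $k-j$ vertices is $O\!\left( n^{-j} \cdot k^{c j} \cdot (d-1)^k \right)$ for some constant $c = c(d)$, and summing over $j \ge 1$ gives $\E_n[N_{prim}(k) - N_{simp}(k)] = O\!\left( \frac{k^{c}}{n}(d-1)^k \right)$, say (the $j=1$ term dominates). Dividing by $\E_n[N_{simp}(k)] \sim (d-1)^k/(2k)$ gives a bound of $O(k^{c+1}/n)$, which tends to $0$ precisely because $k \prec \sqrt n$ — here I am being generous with the exponent; a more careful accounting of the shapes (the number of ways to "fold" a length-$k$ walk onto $k-j$ vertices while staying non-backtracking) should give a bound like $O(k^2/n)$, matching the $\sqrt n$ threshold sharply.

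The main obstacle is the combinatorial bookkeeping in the last step: one must carefully enumerate the possible images (``shapes'') of a non-simple non-backtracking loop of length $k$, keep track of the non-backtracking constraint when counting how the abstract loop maps onto its image, and verify that the number of such shapes and the number of ways to decorate them contributes only a polynomial-in-$k$ factor, so that the gain of $n^{-j}$ per "saved" vertex is not overwhelmed. The non-backtracking condition is what prevents pathological folding and keeps the shape count under control; making this precise — ideally getting the clean $k^2/n$ bound rather than a cruder polynomial bound — is the crux. A secondary, more routine point is handling the configuration-model subtleties (conditioning on simplicity of $G$, or directly in the configuration model dealing with the fact that a "loop" might use a self-loop or a multi-edge of the ambient multigraph) and confirming these contribute negligibly.
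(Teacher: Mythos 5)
Your overall strategy (a first-moment bound on the primitive non-simple count, Markov's inequality, plus concentration of $N_{simp}$ from below) matches the paper's approach for part of the range of $k$, but the key estimate you rely on is not established, and the version you sketch is incorrect as stated. First, the reduction to $N_{ns}(k)$, the count of \emph{all} non-simple non-backtracking loops, already breaks down: $N_{ns}(k)$ includes imprimitive loops such as squares of simple loops of length $k/2$, whose expected number is $\sim 2(d-1)^{k/2}/k$; for $1\prec k\prec \log n$ this dwarfs your claimed bound $O(k^{c}(d-1)^k/n)$. The underlying error is the assertion that the half-edge pairing probability is $\Theta((dn)^{-k})$: a loop that repeats edges requires only one pairing per \emph{distinct} edge of its image, so the correct exponent is the number $e$ of distinct edges, and a shape with $v$ vertices and $e$ edges contributes on the order of $n^{v-e}(d-1)^{e}$. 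Shapes with $e=v$ (images that are single cycles, i.e.\ imprimitive loops) contribute terms of order $(d-1)^{v}$ with no factor of $1/n$ at all. To make your argument work you must restrict from the start to \emph{primitive} non-simple loops and use that their images necessarily contain two independent cycles, so $e\ge v+1$ and every shape gains a factor $1/n$; this is exactly the content of the two Claims inside the paper's Proposition \ref{prop:e-prim-short}.

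Second, and more seriously, the exponent of $k$ in that estimate is the whole game, and you explicitly defer it. The paper's version of the estimate shows the probability that a random non-backtracking walk induces two cycles is $O(k^4/n^2)$, which yields $\E_n[N_{prim}-N_{simp}]/\E_n[N_{simp}]=O(k^4/n)$ and hence covers only $k\prec n^{1/4}$. Your hoped-for sharp bound $O(k^2/n)$ is plausibly true (the dominant shapes are figure-eights and barbells, each with $O(k)$ parameters), but proving it requires precisely the careful shape enumeration you identify as the crux and do not carry out. The paper does not prove it either; instead it covers the remaining range $\log n\prec k\prec\sqrt n$ (which overlaps with $k\prec n^{1/4}$, so a subsequence argument covers all of $k\prec\sqrt n$) by an entirely different mechanism: a spectral-gap argument for the non-backtracking matrix $\tilde A$, giving a.a.s.\ control $N_{prim}(k)\sim(d-1)^k/k$ via $\tr(\tilde A^k)$ (Proposition \ref{prop:aas-prim-long}). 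Without either the sharp $O(k^2/n)$ combinatorial bound or such a substitute, your proposal does not cover $n^{1/4}\preceq k\prec\sqrt n$. The remaining ingredients --- the second-moment concentration of $N_{simp}$ for $1\prec k\prec\sqrt n$, and an integrality argument for bounded $k$, where $N_{simp}$ is asymptotically Poisson and does \emph{not} concentrate, so the lower-tail step of your plan must be replaced --- are as in the paper. (Also, the paper counts loops up to cyclic shift but not orientation reversal, so its normalization is $(d-1)^k/k$ rather than your $(d-1)^k/(2k)$; this is harmless if carried through consistently.)
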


\begin{thm}[High length regime]
  \label{thm:graph-long}
  Take $d\ge 3$ fixed.  Suppose $k$ is some function of $n$ satisfying $k\succ \sqrt{n}$.  Fix $\epsilon>0$. Then
  $$\P_n\left[ N_{simp}(k) \le \epsilon \cdot N_{prim}(k)\right] \to 1, $$
  as $n\to\infty$. 
\end{thm}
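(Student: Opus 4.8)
The plan is a first-moment upper bound on the number of simple loops, together with a lower bound on the number of primitive loops coming from the non-backtracking spectral gap of random regular graphs. Throughout, fix $d\ge 3$ and suppose $k=k(n)\succ\sqrt n$ (in particular $k\succ\log n$).

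\smallskip
\noindent\textbf{Step 1: $N_{prim}(\Gamma,k)\ge \tfrac1{2k}(d-1)^k$ with high probability.}
Let $B=B_\Gamma$ be the non-backtracking (Hashimoto) operator on the $dn$ directed edges of $\Gamma$, so $\tr(B^j)$ counts cyclically non-backtracking closed walks of length $j$. By Möbius inversion the number of \emph{primitive} such rooted walks of length $k$ is $P(k)=\sum_{j\mid k}\mu(k/j)\tr(B^j)$, and since a primitive non-backtracking loop of length $k$ has exactly $k$ distinct rooted representatives, $N_{prim}(\Gamma,k)=P(k)/k$. By the Ihara--Bass relation the eigenvalues of $B$ are $\pm1$ together with $\tfrac{\mu\pm\sqrt{\mu^2-4(d-1)}}2$ over adjacency eigenvalues $\mu$; the trivial eigenvalue $\mu=d$ yields the eigenvalue $d-1$, while by the known spectral gap of random $d$-regular graphs in the configuration model (Friedman's theorem; Broder--Shamir; Bordenave) every other eigenvalue of $B$ has modulus at most some fixed $\rho<d-1$ with high probability. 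On that event $|\tr(B^k)-(d-1)^k|\le dn\,\rho^k=o((d-1)^k)$ once $k\succ\log n$, and the lower-order divisor terms $j\le k/2$ in $P(k)$ are crudely bounded by $dn(d-1)^{k/2}\log_2 k=o((d-1)^k)$. Hence $N_{prim}(\Gamma,k)=(1+o(1))(d-1)^k/k$ with high probability.

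\smallskip
\noindent\textbf{Step 2: $\E_n[N_{simp}(k)]\le \tfrac{e^{O(1)}}{k}(d-1)^k e^{-\beta k^2/(2n)}$, where $\beta=\tfrac{d-2}d>0$.}
A simple non-backtracking loop of length $k$ is automatically primitive, and (up to the $k$-fold cyclic ambiguity) is specified by an ordered tuple of $k$ distinct vertices together with, at each vertex, an ordered pair of distinct half-edges; the probability that the $k$ required half-edge pairings all occur in the configuration model is $\prod_{i=0}^{k-1}(dn-1-2i)^{-1}$. Therefore
$$\E_n\big[\#\{\text{rooted simple NB loops of length }k\}\big]=(n)_k\,(d(d-1))^k\prod_{i=0}^{k-1}\frac1{dn-1-2i}=(d-1)^k\prod_{i=0}^{k-1}\frac{d(n-i)}{dn-1-2i}.$$
An elementary estimate shows that for $d\ge3$ and all $1\le k\le n$ this product is at most $e^{O(1)}e^{-\beta k^2/(2n)}$: the key point is the inequality $\ln\tfrac{d(1-x)}{d-2x}\le(\tfrac2d-1)x$ for $0\le x<1$ (a short one-variable convexity check, using $d\ge3$), while replacing $dn-1-2i$ by $dn-2i$ in the denominators costs only a factor $e^{O(k/n)}=e^{O(1)}$. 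For $k>n$ there are no simple loops at all. Dividing by $k$ gives the claimed bound on $\E_n[N_{simp}(k)]$.

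\smallskip
\noindent\textbf{Step 3: combine.}
Since $k\succ\sqrt n$ we have $k^2/n\to\infty$. By Step 1, Step 2, and Markov's inequality,
$$\P_n\big[N_{simp}(k)>\epsilon N_{prim}(k)\big]\le \P_n\big[N_{prim}(k)<\tfrac1{2k}(d-1)^k\big]+\frac{\E_n[N_{simp}(k)]}{\tfrac{\epsilon}{2k}(d-1)^k}\le o(1)+\frac{2e^{O(1)}}{\epsilon}\,e^{-\beta k^2/(2n)}\to0,$$
which is Theorem~\ref{thm:graph-long}.

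\smallskip
The main obstacle is the \emph{uniformity} of the first-moment bound in Step 2: one must control $\prod_{i<k}\tfrac{d(n-i)}{dn-1-2i}$ by $e^{-\Omega(k^2/n)}$ for \emph{all} $k$ up to $n$, not just $k=O(\sqrt n)$, which for small $d$ (especially $d=3$) forces the convexity estimate on $\ln\tfrac{d(1-x)}{d-2x}$ rather than a naive Taylor expansion around $x=0$; a secondary point is to invoke a spectral-gap statement that applies precisely to the configuration model. Everything else is routine.
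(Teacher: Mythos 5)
Your proposal is correct and follows essentially the same route as the paper: a first-moment bound on $N_{simp}$ from the exact configuration-model product formula (your product $\prod_{i<k} d(n-i)/(dn-1-2i)$ is identical to the paper's expression for $nd(d-1)^{k-1}p$, just estimated more sharply as $e^{-\Omega(k^2/n)}$ rather than merely $o(1)$), combined with the whp asymptotic $N_{prim}(k)\sim(d-1)^k/k$ from the spectral gap of the non-backtracking operator, and then Markov's inequality. The only quibble is cosmetic: the number of divisors of $k$ is not bounded by $\log_2 k$, but replacing that factor by $\tau(k)\le k$ (or summing the geometric series over all $j\le k/2$, as the paper does) still gives $o((d-1)^k)$ when $k\succ\log n$, so nothing breaks.
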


In this paper, $A\prec B$ means that $A=o(B)$, and $A \sim B$ means $\lim \frac AB = 1$.  

In both theorems above, when $d$ is odd, for there to be any $d$-regular graphs on $n$ vertices, $n$ must be even.  Thus in those cases we take $n\to\infty$ along the even integers.  
\begin{rmk}
  The above two theorems also hold if we replace the configuration model $G(d,n)$ with the \emph{uniform model} over all $d$-regular graphs, without self-loops or multiple edges, on $n$ vertices.  These versions can be deduced from the theorems above together with the result that the probability that a (multi)graph from $G(d,n)$ has neither self-loops nor multiple edges tends to a positive constant as $n\to\infty$  (see \cite{Bollobas80} or \cite[Theorem 2.16]{Bollobas01}).
\end{rmk}

\begin{rmk}
  Theorem \ref{thm:graph-short} would \emph{not} be true if we replaced $N_{prim}$ by $N_{all}$, the number of non-backtracking loops, without the primitive condition.  In particular, it would fail for $k$ a fixed composite integer $pq$.  In fact, in that case, it is known  (by \cite{Bollobas80}) that $N_{simp}(k)$ and $N_{simp}(p)$ asymptotically have finite positive mean, and since $\E_n[N_{all}(k)] \ge \E_n[N_{simp}(k)] + \E_n[N_{simp}(p)]$, we get that $N_{all}$ must be greater than $N_{simp}$ by a definite factor a positive proportion of the time.  
\end{rmk}

\paragraph{Non-random graphs.}

Note that the analog of Theorem \ref{thm:graph-short} for fixed sequences of regular graphs (rather than random ones) fails.  Families of graphs with diameter linear in $n$ provide counter-examples, since in this case walks behave like random walk on a line and thus even short walks (and loops) are likely to self-intersect.

On the other hand, we do not know if the non-random version of Theorem \ref{thm:graph-long} holds: 
\begin{question}
  Let $n_j$ be an increasing sequence of positive integers, $\Gamma_{n_j}$ a $d$-regular graph on $n_j$ vertices, and $k$ some function of $n$ with $k(n)\succ\sqrt{n}$.  Is it true that for any fixed $\epsilon>0$,
  \begin{align*}
    N_{simp}(\Gamma_{n_j}, k(n_j)) < \epsilon \cdot N_{prim}(\Gamma_{n_j},k(n_j))
  \end{align*}
  for all $j$ sufficiently large (depending on $\epsilon$)? 
\end{question}

\subsection{Discussion of the proofs}
\label{sec:discussion-proofs}

\paragraph{Heuristic:} A random loop of length $k$ on a random graph should behave in some sense like choosing a list of $k$ vertices uniformly at random from all $n$ vertices and making the loop travel through them in order.  If this were the case, then the solution to the standard ``birthday problem'' for picking $k$ birthdays randomly from among $n$ suggests that the transition between all the vertices being distinct versus having at least one repetition should occur around $k=\sqrt{n}$.

\bigskip

An immediate issue with the above heuristic is that on a $d$-regular graph, a non-backtracking walk can only be extended by one step in $d-1$ ways, so certainly not all $n$ vertices are equally likely to come after some given vertex.  However, since we are choosing the graph randomly as well, one can in fact think of the next vertex along a walk as being randomly selected from the $n$ vertices.  This is because we can build the graph and walk simultaneously, only making choices about the graph when the walk forces us to.  There are, however, two issues with doing this:
\begin{enumerate}[(i)] 
\item  it only gives control over expected values of counts; to get asymptotic almost sure (a.a.s.\@) control requires additional work,
\item it fails if the walk already self-intersects, since in that case we would not get the necessary freedom in the choice of next edge to follow.
\end{enumerate}

If our ultimate goal was to study \emph{walks} rather than loops, neither of these issues would be problematic.  In fact the technique suggested above easily gives that the transition for a random non-backtracking walk on a random graph to be self-intersecting occurs around $k=\sqrt{n}$.  Conditioning on the walk being a loop makes things considerably harder.  Unlike the number of non-backtracking walks, the number of non-backtracking loops depends on the particular graph.  This means that knowing the expected number of simple loops is not enough; we must know additional information about the distribution of the number of simple loops, and separate information about the count of primitive loops.

\subsection{Outline of paper}
\label{sec:outline}
\begin{itemize}
\item In Section \ref{sec:count-simp-loop} we compute the expected number of simple loops.   In the low length regime $k\prec \sqrt{n}$, we also bound the second moment of the number of simple loops and then use this to control the a.a.s.\@ behavior.  

\item In Section \ref{sec:count-prim-loop} we study the count of primitive loops.  In the \emph{very} low length regime $k\prec n^{1/4}$, we control the expected value, while for $k\succ \log n$ we control the a.a.s.\@ behavior.
  
\item In Section \ref{sec:simple-prim} we combine the results from the previous sections to prove both the main theorems.  
\end{itemize}

\paragraph{Acknowledgments:}  We would like to thank Noga Alon, Lionel
Levine, and Alex Wright for useful conversations.

\section{Counting simple loops}
\label{sec:count-simp-loop}

We will see below that estimating \emph{expected} counts of simple loops is relatively easy (Proposition \ref{prop:e-simp-short} and Proposition \ref{prop:e-simp-long}), using the idea of building the graph and walk simultaneously.  However, we will also need control of the a.a.s.\@ behavior, so we need to rule out high variance.  In the low length regime $1\prec k\prec \sqrt{n}$, we estimate the second moment (Proposition \ref{prop:var-simp-short}), and then use the second moment method to deduce the a.a.s.\@ behavior (Proposition \ref{prop:aas-simple-short}).  In the high length regime $k\succ \sqrt{n}$, since we are trying to prove an inequality of the form $N_{simp}\le \epsilon N_{prim}$, the first moment method will suffice.

\subsection{Expected count of simple loops}
\label{sec:e-simple}

\begin{prop}
  \label{prop:e-simp-short}
  Let $k\prec \sqrt{n}$.  Then 
  \begin{align*}
    \E_n[N_{simp}(k)] \sim \frac{ (d-1)^k}{k},
  \end{align*}
  as $n\to\infty$.
\end{prop}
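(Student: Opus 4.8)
The plan is a first-moment computation, morally the same as building the random graph step by step as the loop is traced out (the heuristic of Section~\ref{sec:discussion-proofs}). First I would pass from loops to based walks: a simple non-backtracking loop of length $k$ visits $k$ distinct vertices, so its $k$ cyclic shifts are $k$ distinct based walks; hence the number of simple non-backtracking closed walks of length $k$ on any $d$-regular graph $\Gamma$ equals $k\cdot N_{simp}(\Gamma,k)$, and it suffices to compute the expected number of such walks and divide by $k$.

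I would then expand this expectation by linearity over all ``candidate'' walks, evaluating each term as a matching probability in the configuration model. A candidate consists of an ordered tuple of distinct vertices $v_1,\dots,v_k$ together with, at each $v_i$, an ordered pair of distinct half-edges $(b_i,a_i)$ — the half-edge by which the walk enters $v_i$ and the one by which it leaves; there are $\frac{n!}{(n-k)!}$ choices of vertex tuple and $d(d-1)$ choices at each vertex. Such a candidate is \emph{realized} in $G(d,n)$ precisely when the $k$ pairs $\{a_i,b_{i+1}\}$ (indices mod $k$) all lie in the random pairing of the $nd$ half-edges; since $b_i\ne a_i$ these $k$ pairs are pairwise disjoint, and one checks that realized candidates are in bijection with the based simple non-backtracking closed walks of length $k$ occurring in $\Gamma$ (for $k\ge 3$ the non-backtracking condition is automatic from distinctness of the $v_i$; for $k\le 2$ it follows from $b_i\ne a_i$). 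The probability that $k$ specified disjoint pairs all lie in the pairing is $\prod_{j=1}^{k}(nd-2j+1)^{-1}$, which is well-defined because $k\prec\sqrt n$ forces $nd-2k+1>0$. Linearity of expectation therefore gives the exact identity
\begin{align*}
  k\cdot\E_n[N_{simp}(k)] \;=\; \frac{n!}{(n-k)!}\,\bigl(d(d-1)\bigr)^{k}\,\prod_{j=1}^{k}\frac{1}{nd-2j+1}.
\end{align*}

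The last step is the asymptotics. Writing $\frac{n!}{(n-k)!}=n^{k}\prod_{j=1}^{k-1}\bigl(1-\tfrac{j}{n}\bigr)$ and $\prod_{j=1}^{k}(nd-2j+1)^{-1}=(nd)^{-k}\prod_{j=1}^{k}\bigl(1-\tfrac{2j-1}{nd}\bigr)^{-1}$, the main factors combine to $n^{k}(nd)^{-k}\bigl(d(d-1)\bigr)^{k}=(d-1)^{k}$, while the two residual products are each $1+o(1)$ — their logarithms are $O(k^{2}/n)=o(1)$ since $k\prec\sqrt n$. Hence $k\cdot\E_n[N_{simp}(k)]\sim (d-1)^{k}$, which is the claim.

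I do not expect a serious obstacle here; this is a direct calculation. The only points needing care are the combinatorial bookkeeping — the bijection between realized candidates and based simple loops, and in particular pinning down the factor $d(d-1)$ — and confirming that $k\prec\sqrt n$ is exactly the hypothesis that makes $\prod_{j=1}^{k-1}(1-\tfrac{j}{n})$ and $\prod_{j=1}^{k}(1-\tfrac{2j-1}{nd})^{-1}$ tend to $1$.
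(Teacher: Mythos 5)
Your proposal is correct and is essentially the paper's proof in a different guise: the paper computes the same first moment by revealing the pairing sequentially along the walk, and its product of conditional probabilities simplifies algebraically to exactly your closed-form identity $k\,\E_n[N_{simp}(k)]=\frac{n!}{(n-k)!}\bigl(d(d-1)\bigr)^{k}\prod_{j=1}^{k}(nd-2j+1)^{-1}$. The final asymptotic step --- both residual products are $\exp\bigl(O(k^{2}/n)\bigr)=1+o(1)$ precisely because $k\prec\sqrt{n}$ --- is likewise the same.
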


\begin{proof}
  The main idea is to build the graph and walk simultaneously.  A similar result, with a similar proof, appears in \cite[Lemma 4]{BS}.
  Let $p$ denote the probability that a randomly chosen non-backtracking walk on a random regular graph is closed and simple.  To compute $p$, we will consider choosing the random graph at the same time as the random walk, only making choices about which half-edges are paired in the graph when we are forced to.  The first factor below is the probability that the first half-edge the path follows is paired with a half-edge that leads to a different vertex (since there are $dn-1$ half-edges it could be paired with, of which $d-1$ are incident to the start vertex).  Continuing in this way, we see that
  \begin{align*}
    p= \left(1-\frac{d-1}{dn-1}\right) \left(1-\frac{(d-1)+(d-2)}{dn-3}\right) \cdots \left(1- \frac{(d-1)+(k-2)(d-2)}{dn-(2k-3)}\right)\left(\frac{d-1}{dn-(2k-1)}\right).
  \end{align*}
  
  Since, a fortiori, $k\prec n$, we get
  \begin{align*}
    p \sim \left(1-\frac{1}{n}\right) \left(1-\frac{2}{n}\right) \cdots \left(1-\frac{k-1}{n}\right) \left( \frac{d-1}{d} \cdot \frac{1}{n}\right).
  \end{align*}

  Then using the estimate $1-x = \exp(-x+O(x^2))$ for small $x$ repeatedly gives
  \begin{align*}
    p & \sim \exp(-k^2/n) \left( \frac{d-1}{d} \cdot \frac{1}{n}\right)   \\
      & \sim  \frac{d-1}{d} \cdot \frac{1}{n},
  \end{align*}
  where in the last step we have used the assumption that $k\prec \sqrt{n}$.

  Now to compute the desired expected value, we note that every regular graph has $nd(d-1)^{k-1}$ non-backtracking walks of length $k$.  Each simple loop will be counted $k$ times in this way.  Hence
  \begin{align*}
    \E_n [k \cdot N_{simp}(k)] &= nd(d-1)^{k-1}p \\
    & \sim nd(d-1)^{k-1}\left( \frac{d-1}{d} \cdot \frac{1}{n}\right) \\
    &\sim (d-1)^k,
  \end{align*}
which then gives the desired result.  
\end{proof}

\begin{prop}
  \label{prop:e-simp-long}
  Suppose $k$ is some function of $n$ satisfying $k\succ \sqrt{n}$.  Then
  $$\E_n\left[ N_{simp}(k)\right] \prec \frac{(d-1)^k}{k},$$
  as $n\to\infty$.  
\end{prop}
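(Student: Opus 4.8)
The plan is to reuse the exact product formula for $p$, the probability that a uniformly random non-backtracking walk of length $k$ on a graph drawn from $G(d,n)$ is closed and simple, that was established in the proof of Proposition~\ref{prop:e-simp-short}, together with the identity $\E_n[N_{simp}(k)] = \frac{nd(d-1)^{k-1}}{k}\,p$. The only change from the low-length regime is that we now bound this product crudely from above rather than simplify it asymptotically. Since a simple loop passes through $k$ distinct vertices, $N_{simp}(k)$ vanishes identically once $k>n$; so we may assume $\sqrt n \prec k \le n$, in which case $k\ge 2$ for all large $n$ and every denominator $dn-(2j-1)$ appearing in the formula (for $1\le j\le k$) is at least $dn-2n+1=(d-2)n+1>0$, so the formula is valid throughout this range.

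Next I would extract Gaussian decay from the product. Writing
\begin{align*}
  p = \prod_{j=1}^{k-1}\left(1-\frac{(d-1)+(j-1)(d-2)}{dn-(2j-1)}\right)\cdot\frac{d-1}{dn-(2k-1)},
\end{align*}
I apply $1-x\le e^{-x}$ to each factor. In the denominators I use $dn-(2j-1)\le dn$, and in the numerators I use $(d-1)+(j-1)(d-2)\ge j$ (valid since $d\ge 3$); the resulting exponent is then at least $\sum_{j=1}^{k-1}\frac{j}{dn}=\frac{k(k-1)}{2dn}\ge\frac{k^2}{4dn}$. For the final factor I use $dn-(2k-1)\ge (d-2)n\ge n$, so it is at most $\frac{d-1}{n}$. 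Putting these together gives $p\le \frac{d-1}{n}\exp\left(-\frac{k^2}{4dn}\right)$, hence
\begin{align*}
  \E_n[N_{simp}(k)] \le \frac{d\,(d-1)^k}{k}\,\exp\left(-\frac{k^2}{4dn}\right).
\end{align*}
Dividing by $(d-1)^k/k$, the ratio is at most $d\exp(-k^2/(4dn))$, and since $k\succ\sqrt n$ forces $k^2/n\to\infty$ (with $d$ fixed), this tends to $0$, which is precisely the asserted bound $\E_n[N_{simp}(k)]\prec (d-1)^k/k$.

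This argument is an elementary upper bound on a product whose exact form is already in hand, so I do not expect a genuine obstacle. The only two points needing care are (a) checking that the product formula for $p$ remains valid all the way up to $k=n$ — i.e.\ that every denominator stays positive — which is handled above, and (b) choosing the per-factor estimates crudely enough to be uniform in $j$ while still retaining a factor of the form $\exp(-c\,k^2/n)$, since it is precisely this factor that forces the limit to be $0$.
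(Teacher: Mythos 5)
Your proof is correct and follows essentially the same route as the paper: both reuse the exact product formula for $p$ from the proof of Proposition~\ref{prop:e-simp-short} and extract a factor of the form $\exp(-ck^2/n)$, which tends to $0$ because $k \succ \sqrt{n}$. The only difference is cosmetic — you bound every factor via $1-x\le e^{-x}$ and sum the exponents, whereas the paper bounds the last half of the factors by the middle one; both yield the same Gaussian decay and the same conclusion.
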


\begin{proof}
  Let $p$ denote the probability that a randomly chosen non-backtracking walk on a random regular graph is a closed and simple.  We recall the exact formula for $p$ used in the proof of Proposition \ref{prop:e-simp-short}: 
  \begin{align*}
    p= \left(1-\frac{d-1}{dn-1}\right) \left(1-\frac{(d-1)+(d-2)}{dn-3}\right) \cdots \left(1- \frac{(d-1)+(k-2)(d-2)}{dn-(2k-3)}\right)\left(\frac{d-1}{dn-(2k-1)}\right).
  \end{align*}

  Using that the denominators in the above are increasing and numerators are decreasing from one fraction to the next (excluding the last term, for which we use a different estimate), we see that
  \begin{align*}
    p\le \left(1- \frac{(k/2)(d-2)}{dn-(2k-3)} \right)^{(k-1)/2} \cdot O(1/n).  
  \end{align*}
  (Note that we can assume $k\le n$, since otherwise there are no simple walks of length $k$).  Using the approximation $1-x = \exp(-x+O(x^2))$, and the assumption that $k\succ \sqrt{n}$, we get from the above that
  \begin{align*}
    p=o(1) \cdot \frac{1}{n}.  
  \end{align*}

  Then, as in the proof of Proposition \ref{prop:e-simp-short}, we get
  \begin{align*}
    \E_n[k\cdot N_{simp}(k)] &= nd(d-1)^kp \\
                             &=nd(d-1)^k\cdot o(1) \cdot \frac{1}{n}.   \\
                             &=o\left((d-1)^k\right),
  \end{align*}
which implies the desired result.
\end{proof}

\subsection{Variance of count of simple loops}
\label{sec:var-simple}

\begin{prop}
  \label{prop:var-simp-short}
  Let $1 \prec k\prec \sqrt{n}$.  Then 
  \begin{align*}
    \E_n[N^2_{simp}(k)] \le (1+o(1)) \cdot \frac{ (d-1)^{2k}}{k^2},
  \end{align*}
  as $n\to\infty$.
\end{prop}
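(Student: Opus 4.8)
The plan is to run the same ``build the graph and walk as you go'' idea, but now for a \emph{pair} of non-backtracking walks, and show that for $k \prec \sqrt n$ the two walks are asymptotically independent (in the relevant sense), so that the second moment is asymptotically the square of the first moment. Concretely, write $N_{simp}(k) = \frac{1}{k}\sum_{w} \mathbf{1}[w \text{ closed and simple}]$, where $w$ ranges over the $nd(d-1)^{k-1}$ non-backtracking walks of length $k$ on the (random) graph, so that
\begin{align*}
  \E_n[k^2 N_{simp}^2(k)] = \sum_{w_1, w_2} \P_n[w_1, w_2 \text{ both closed and simple}].
\end{align*}
Here $w_1,w_2$ range over \emph{pairs of formal non-backtracking walk-shapes}; the subtlety is that for a given pair of walk-shapes, whether the corresponding walks exist in $\Gamma$ and are simple depends on how the half-edges get paired, and two formal walks can ``collide'' (share vertices or edges). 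I would split the sum according to this collision pattern: the dominant term comes from pairs $(w_1,w_2)$ that use $2k$ distinct vertices and $2k$ distinct edges, i.e. are vertex-disjoint.

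For the dominant (disjoint) term: again reveal the graph only when forced. Building $w_1$ first contributes (as in Proposition \ref{prop:e-simp-short}) a factor $\sim \frac{d-1}{d}\cdot\frac1n$ for the probability it is closed and simple; then building $w_2$, conditioned on $w_1$ already built and on $w_2$ avoiding all vertices of $w_1$, contributes another factor of the same form — the relevant ratios are now things like $1 - \frac{\Theta(j) + \Theta(k)}{dn - \Theta(k)}$, but since $k \prec \sqrt n$ we still have $\sum_j \frac{\Theta(j)+\Theta(k)}{n} \to 0$ (this uses $k^2/n\to 0$ \emph{and} $k\cdot k/n\to 0$, the same bound), so the product of those ratios is still $1+o(1)$, and the final ``closing'' factor is again $\sim \frac{d-1}{d}\cdot\frac1n$. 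Multiplying by the number of ordered pairs of non-backtracking walk-shapes, $\big(nd(d-1)^{k-1}\big)^2$, gives $(1+o(1))\,\frac{(d-1)^{2k}}{\,}$ after the $\frac{1}{k^2}$ is reinstated, i.e. this term alone already matches the claimed upper bound. Note the inequality direction is in our favor: we only need an \emph{upper} bound, so we are free to bound the ``avoid $w_1$'' conditioning crudely.

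The main obstacle is the non-disjoint (``collision'') terms, which must be shown to be $o\!\big((d-1)^{2k}/k^2\big)$. I would parametrize a colliding pair by the combinatorial ``overlap graph'': the union $w_1\cup w_2$ has some number $V$ of distinct vertices and $E$ of distinct edges with $V < 2k$ (equivalently it contains a cycle, or the walks share structure). The count of walk-shape pairs realizing a given overlap type is at most $\sim C(d-1)^{E'}$ for an appropriate exponent, while the probability that such a configuration embeds closed-and-simple into $\Gamma$ is roughly $n^{-V'}$ where the defect $E' - V'$ relative to the disjoint case is strictly negative — each extra coincidence of vertices costs a factor $\Theta(k/n)$ or $\Theta(k^2/n) = o(1)$. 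Summing the (few, combinatorially bounded) overlap types and using $k \prec \sqrt n$ to make each $o(1)$ factor do its job, the total collision contribution is $o\!\big((d-1)^{2k}/k^2\big)$. I would organize this by first handling edge-overlaps (walks sharing an oriented edge, forcing long shared segments by non-backtracking) and then vertex-only overlaps, in each case reducing to: ``a non-backtracking walk or pair of walks with a prescribed self/mutual-intersection has probability $o(1)\cdot n^{-(\#\text{distinct vertices})}$ of being realized and closed,'' which is the natural generalization of the computation of $p$ in Proposition \ref{prop:e-simp-short}. This bookkeeping of overlap types is where the real work lies; everything else is a rerun of the first-moment argument.
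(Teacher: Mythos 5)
Your overall strategy --- second moment via ordered pairs of non-backtracking walks, sequential revelation of the half-edge pairing, dominant term from non-interacting pairs --- is the same as the paper's, and your treatment of the dominant term is correct (as you note, for an upper bound one may crudely bound every ``avoidance'' ratio by $1$ and keep only the two closing factors of $(1+o(1))\tfrac{d-1}{dn}$). The gap is exactly where you concede the real work lies: the colliding pairs. Two problems there. First, the claim that the overlap types are ``few, combinatorially bounded'' is false: two distinct simple loops of length $k$ can share up to $\Theta(k)$ disjoint arcs, so the number of overlap patterns grows with $k$, and you would have to sum a series over the number $s$ of shared arcs, checking that each additional arc costs a factor $O(k^2/n)=o(1)$ so the series is dominated by its first term. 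Second, your ``defect $E'-V'$ is strictly negative'' accounting does not cover the near-diagonal pairs $w_2\in\{w_1,\bar w_1\}$ (up to cyclic shift), for which the union graph has $E-V=0$ just as in the disjoint case; there the saving comes from the count of shapes ($O(k\cdot nd(d-1)^{k-1})$ rather than the square), the contribution is $\Theta(k(d-1)^k)$, and one genuinely needs the hypothesis $1\prec k$ to absorb it --- a point your sketch never isolates.

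The paper sidesteps the overlap enumeration entirely with two devices worth knowing. For the main term it does not require the two loops to be vertex-disjoint, only that the distinguished initial vertex of one is not a vertex of the other; that alone makes the two closing events each cost $(1+o(1))\tfrac{d-1}{dn}$, so the ``main'' case is much larger than yours and the exceptional case correspondingly smaller. For the remaining colliding pairs it observes that if $\gamma_1\ne\gamma_2$, $\gamma_1\ne\bar\gamma_2$ are both simple loops that meet, then there is a unique edge $e$ of $\gamma_2$ lying on neither $\gamma_1$ nor $\bar\gamma_1$ after which $\gamma_2$ follows $\gamma_1$ or $\bar\gamma_1$ to its end; a union bound over the $k$ possible positions of $e$, charging $O(1/n)$ for its forward endpoint landing on the required vertex, gives the single clean bound $O\bigl(\tfrac1n\cdot\tfrac kn\cdot\tfrac kn\bigr)=o(1/n^2)$ for all colliding configurations at once, using $k^2\prec n$. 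Your route is completable in principle, but as written the collision bound rests on an incorrect structural claim and an unexecuted bookkeeping scheme, which is the heart of the proposition.
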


\begin{proof}
  Given a graph $\Gamma$, let $L_{dist}$ denote the number of ordered pairs $(\gamma_1,\gamma_2)$, where each $\gamma_i$ is an oriented simple loop on the graph $\Gamma$ with a choice of distinguished point on the loop, and such that $\gamma_1\ne \gamma_2$ and $\gamma_1 \ne \overline \gamma_2$ (here $\overline \gamma$ denotes the loop $\gamma$ but with orientation reversed).

  Since there are exactly $nd(d-1)^{k-1}$ non-backtracking paths of length $k$, we have
  \begin{align*}
    \E_n[L_{dist}] = \left(nd(d-1)^{k-1}\right)^2 p,
  \end{align*}
where $p$ is the probability for $\Gamma$ a randomly chosen regular graph and $\gamma_1$, $\gamma_2$ randomly chosen non-backtracking walks on $\Gamma$, that $\gamma_1,\gamma_2$ are both simple loops with $\gamma_1\ne \gamma_2$, $\gamma_1 \ne \overline \gamma_2$.  We can write $p=p_1 + p_2$, where
\begin{itemize}
\item $p_1$ is the probability (under the same choices as above) that $\gamma_1,\gamma_2$ are both simple loops and that the initial vertex of $\gamma_1$ is \emph{not} one of the vertices of $\gamma_2$,
\item $p_2$ is the probability that $\gamma_1,\gamma_2$ are both simple loops, the initial vertex of $\gamma_1$ \emph{is} one of the vertices of $\gamma_2$, and $\gamma_1\ne \gamma_2$, $\gamma_1 \ne \overline \gamma_2$.
\end{itemize}

To bound these probabilities from above, we will again consider choosing the random graph at the same time as the random walk, only making choices about the graph when we are forced to.

To bound $p_1$, note that the probability that the last edge of $\gamma_1$ goes back to its initial vertex $v_1$ is at most $(1+o(1))\frac{d-1}{d}\frac 1n$, where the $o(1)$ bound on the error uses that $k \prec n$. The analogous statement is true for $\gamma_2$ since the initial vertex of $\gamma_2$ is disjoint from $\gamma_1$.  Hence we get
\begin{align*}
  p_1 \le (1+o(1))\left(\frac{d-1}{d} \cdot \frac{1}{n}\right)^2.  
\end{align*}

\begin{figure}[h!]
 \centering 
 \includegraphics{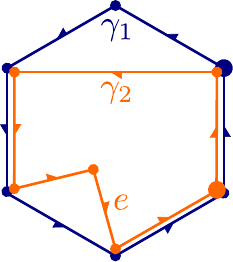}
 \caption{When the initial vertex of $\gamma_2$ lies on $\gamma_1$.}
 \label{fig:picture}
\end{figure}

For $p_2$, we get a factor of $O(1/n)$ from the condition that $\gamma_1$ is a simple loop.  We also get factor of $O(k/n)$ from the condition that the initial vertex of $\gamma_2$ coincides with a vertex of $\gamma_1$.  Now if $(\gamma_1,\gamma_2)$ is a pair of simple loops in $L_{dist}$, there exists a unique edge $e$ of $\gamma_2$ such that $e$ is part of neither $\gamma_1$ nor $\bar \gamma_1$, and such that after $\gamma_2$ traverses $e$, it exactly follows either $\gamma_1$ or $\bar \gamma_1$ until its final vertex (Figure \ref{fig:picture}).  There are $k$ choices of where along $\gamma_2$ this edge $e$ is, and the probability that the forward endpoint of $e$ coincides the appropriate vertex of $\gamma_1$ is $O(1/n)$.  Thus we get a further factor of $O(k/n)$.  Putting this all together, we find
\begin{align*}
  p_2 &  =  O\left( \frac{1}{n} \cdot \frac{k}{n} \cdot \frac{k}{n}\right) \\
  &=o\left(\frac{1}{n^2}\right), 
\end{align*}
where in the last step we have used the assumption that $k^2 \prec n$.

Putting the two estimates together gives
\begin{align*}
  p = p_1 + p_2 &\le  (1+o(1))\left(\frac{d-1}{d} \cdot \frac{1}{n}\right)^2 + o\left(\frac{1}{n^2}\right) \\
  &\le  (1+o(1))\left(\frac{d-1}{d} \cdot \frac{1}{n}\right)^2.  
\end{align*}

Now returning to $L_{dist}$, we get
\begin{align*}
  \E_n[L_{dist}] &= \left(nd(d-1)^{k-1}\right)^2 p \\
  & \le \left(nd(d-1)^{k-1}\right)^2 (1+o(1))\left(\frac{d-1}{d} \cdot \frac{1}{n}\right)^2 \\
  &\le (1+o(1)) \cdot (d-1)^{2k}.
\end{align*}

  To finish the proof we combine the above with the term coming from pairs of loops that are either identical or differ only in orientation. There are $k^2N_{simp}^2$ pairs of loops with distinguished start points. So using Proposition \ref{prop:e-simp-short} we compute:
  \begin{align*}
    \E_n[k^2\cdot N_{simp}^2] &= \E_n[L_{dist} +  2k^2 \cdot N_{simp}] = \E_n[L_{dist}] + \E_n[2k^2 \cdot N_{simp}] \\
                         &\le  (1+o(1)) \cdot (d-1)^{2k} + 2(1+o(1)) \cdot k(d-1)^k\\
                         &\le (1+o(1)) \cdot (d-1)^{2k},
  \end{align*}
  where in the last line we have used the assumption that $1\prec k$.  Dividing by $k^2$ gives the desired result.  
  
\end{proof}

\subsection{Asymptotic almost sure behavior of simple loops}
\label{sec:aas-simple}

\begin{prop}
  \label{prop:aas-simple-short} 
Let $1\prec k\prec \sqrt{n}$.  Fix $\epsilon>0$.  Then
$$\lim_{n\to\infty} \P_n\left[ \left|\frac{N_{simp}(k)}{(d-1)^k/k} -1 \right| < \epsilon \right] = 1.$$
\end{prop}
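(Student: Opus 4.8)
The plan is to apply the second moment method (Chebyshev's inequality), using the first-moment asymptotic of Proposition \ref{prop:e-simp-short} and the second-moment bound of Proposition \ref{prop:var-simp-short} as the two inputs. Write $\mu_n := \E_n[N_{simp}(k)]$ and $m_n := (d-1)^k/k$. Since $1\prec k\prec \sqrt n$, both prior propositions apply, giving $\mu_n = (1+o(1))\,m_n$ and $\E_n[N_{simp}^2(k)] \le (1+o(1))\,m_n^2$. From these I would first bound the variance:
\[
  \var_n\!\big(N_{simp}(k)\big) = \E_n[N_{simp}^2(k)] - \mu_n^2 \le (1+o(1))\,m_n^2 - (1-o(1))\,m_n^2 = o(m_n^2).
\]

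Next, apply Chebyshev's inequality: for any fixed $\delta>0$,
\[
  \P_n\!\left[\, |N_{simp}(k) - \mu_n| \ge \delta\, m_n \,\right] \le \frac{\var_n(N_{simp}(k))}{\delta^2 m_n^2} = \frac{o(m_n^2)}{\delta^2 m_n^2} \longrightarrow 0
\]
as $n\to\infty$. Finally, I would convert concentration around $\mu_n$ into concentration around $m_n$: since $\mu_n = (1+o(1))\,m_n$, there is $n_0$ with $|\mu_n - m_n| < (\epsilon/2)\,m_n$ for all $n\ge n_0$; running the Chebyshev estimate with $\delta = \epsilon/2$ and the triangle inequality then shows that, on an event of probability tending to $1$, $|N_{simp}(k) - m_n| \le |N_{simp}(k) - \mu_n| + |\mu_n - m_n| < \epsilon\, m_n$, which is precisely $\big|N_{simp}(k)/m_n - 1\big| < \epsilon$. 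This is the claimed limit.

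I do not expect a genuine obstacle here: the substantive work is already contained in Proposition \ref{prop:var-simp-short} (in particular the bound on the contribution $p_2$ of pairs of simple loops sharing a vertex, which forces the variance to be lower order than the squared mean). The only points requiring care are bookkeeping: checking that the $o(1)$ error terms are functions of $n$ alone (legitimate, since $k$ is a fixed function of $n$), that the hypothesis $1\prec k$ is what licenses using Proposition \ref{prop:var-simp-short}, and getting the order of quantifiers right when passing from ``within $\delta$ of the mean'' to ``within $\epsilon$ of $m_n$''.
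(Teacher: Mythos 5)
Your proposal is correct and follows essentially the same route as the paper: Chebyshev's inequality with the variance bound $\var_n(N_{simp}) = o\bigl((d-1)^{2k}/k^2\bigr)$ obtained by combining Propositions \ref{prop:e-simp-short} and \ref{prop:var-simp-short}, followed by replacing the mean $\mu_n$ with $(d-1)^k/k$ using $\mu_n \sim (d-1)^k/k$. The only (cosmetic) difference is that you fold the two-step replacement into a single triangle-inequality argument, whereas the paper first proves concentration around $\E_n N_{simp}$ and then passes to the limit; both are fine.
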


\begin{rmk}
  Note that above is false for $k$ a constant, since in that case $N_{simp}$ has a Poisson distribution with positive variance (\cite{Bollobas80}, Theorem 2). %
\end{rmk}
\begin{proof}
We have computed the first and second moments of $N_{simp}$ above, so to obtain the a.a.s.\@ behavior we use the second moment method.  For any $\delta>0$, we have, using Proposition \ref{prop:var-simp-short}
  \begin{align*}
    \P_n[|N_{simp}-\E_nN_{simp}|\ge \delta] &= \P_n[(N_{simp}-\E_nN_{simp})^2\ge \delta^2 ] \\
                                          &\le \frac{\E_n[(N_{simp}-\E_nN_{simp})^2]}{\delta^2} \\
                                          &= \frac{\E_n[N_{simp}^2]-\E_n[N_{simp}]^2}{\delta^2}\\
                                          &\le \frac{(1+o(1)) (d-1)^{2k}/k^2 - (d-1)^{2k}/k^2}{\delta^2} \\
                                          &\le \frac{o(1) (d-1)^{2k}/k^2}{\delta^2}.
  \end{align*}
Taking $\delta=\epsilon(d-1)^k/k$ and dividing by $(d-1)^k/k$ gives 
\begin{align*}
  \P_n\left[\left|\frac{N_{simp}}{(d-1)^k/k}-\frac{\E_n N_{simp}}{(d-1)^k/k}\right| \ge \epsilon \right] \le \frac{o(1)}{\epsilon^2}.  
\end{align*}

Hence for any $\epsilon>0$
\begin{align*}
  \lim_{n\to\infty} \P_n\left[\left|\frac{N_{simp}}{(d-1)^k/k}-\frac{\E_n N_{simp}}{(d-1)^k/k}\right| \ge \epsilon \right] = 0, 
\end{align*}
 and then using Proposition \ref{prop:e-simp-short}, which states that $\lim_{n\to \infty} \frac{\E_n N_{simp}}{(d-1)^k/k}=1$, we get
\begin{align*}
  \lim_{n\to\infty} \P_n\left[ \left|\frac{N_{simp}(k)}{(d-1)^k/k} -1 \right| \ge \epsilon \right]  & =0,
\end{align*}
as desired.  

\end{proof}

\section{Counting primitive loops}
\label{sec:count-prim-loop}

When the length is very low ($k\prec n^{1/4}$), we will show that the probability that a non-backtracking walk forms \emph{two} loops in its induced subgraph is so low that this probability is still negligible conditioned on the event that the walk is a loop.  It follows that in this regime the expected number of \emph{primitive} loops (Proposition \ref{prop:e-prim-short}) has the same asymptotics as the expected number of simple loops, computed in the previous section.  

When $k\succ \log n$, we use the spectral gap for the adjacency matrix $A$ of a random graph to control the a.a.s.\@ count of loops (Proposition \ref{prop:aas-prim-long}) The trace of $A^k$ counts all loops of length $k$, without the non-backtracking condition.
Since we are interested in non-backtracking loops, we study the related ``non-backtracking matrix'' $\tilde A$ whose eigenvalues can be computed in terms of those of $A$.

\subsection{Length $k\prec n^{1/4}$}

\begin{prop}
  \label{prop:e-prim-short}
Let $d\ge 3$, and let $k\prec n^{1/4}$.  Then 
  \begin{align*}
    \E_n[N_{prim}(k)] \sim \frac{ (d-1)^k}{k}
  \end{align*}
  as $n\to\infty$.
\end{prop}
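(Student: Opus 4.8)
The plan is to leverage the precise count of simple loops we already have: since $k\prec n^{1/4}$ implies $k\prec\sqrt n$, Proposition \ref{prop:e-simp-short} gives $\E_n[N_{simp}(k)]\sim (d-1)^k/k$. As every simple loop is primitive, $N_{prim}(k)\ge N_{simp}(k)$ pointwise, so $\E_n[N_{prim}(k)]\ge \E_n[N_{simp}(k)]\sim (d-1)^k/k$ already. It therefore suffices to prove the matching upper bound, and for this it is enough to show
\[
\E_n\bigl[N_{prim}(k)-N_{simp}(k)\bigr]=o\!\left(\frac{(d-1)^k}{k}\right),
\]
i.e.\ that the expected number of primitive but non-simple non-backtracking loops of length $k$ is negligible.

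The key structural observation is that a primitive non-backtracking loop that is \emph{not} simple must traverse a subgraph whose first Betti number is at least $2$. Indeed, the subgraph $T$ traced out by a non-backtracking closed walk is connected and has minimum degree at least $2$ (at each visited vertex the walk enters along one edge and, by non-backtracking, leaves along a different one), so $b_1(T)\ge 1$, with equality exactly when $T$ is a single cycle $C_m$; but a non-backtracking closed walk on $C_m$ is simply a power of the loop around $C_m$, so if it is primitive and has length $k$ it must equal $C_m$ and hence be simple. Now $b_1(T)$ equals the number of ``cycle-closing'' steps of the walk, meaning steps at which the edge just added to the trace joins two already-visited vertices (rather than reaching a fresh vertex). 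Hence a primitive non-simple loop of length $k$ gives rise to exactly $k$ distinct closed non-backtracking walks (one per starting point, distinct by primitivity), each having at least two cycle-closing steps. Writing $W$ for the number of closed non-backtracking walks of length $k$ with at least two cycle-closing steps, we conclude $k\bigl(N_{prim}(k)-N_{simp}(k)\bigr)\le W$.

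To bound $\E_n[W]$ I would build the graph and the walk simultaneously, exactly as in the proof of Proposition \ref{prop:e-simp-short}, revealing a pairing of half-edges only when the walk forces it. At any step, conditioned on everything revealed so far, the walk has visited $O(k)$ vertices, so if a new pairing is revealed the probability it lands on a half-edge incident to an already-visited vertex is at most $\tfrac{d\cdot O(k)}{dn-O(k)}=O(k/n)$; if no new pairing is revealed the step is not cycle-closing. Fixing two positions $i<j$ and multiplying these conditional bounds shows the probability of cycle-closing steps at both $i$ and $j$ is $O(k^2/n^2)$, and summing over the $\binom k2$ choices of $\{i,j\}$ gives that a uniformly random non-backtracking walk of length $k$ on $\Gamma\sim G(d,n)$ has at least two cycle-closing steps with probability $O(k^4/n^2)$. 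Since every $d$-regular graph has $nd(d-1)^{k-1}$ non-backtracking walks of length $k$,
\[
\E_n[W]=nd(d-1)^{k-1}\cdot O\!\left(\frac{k^4}{n^2}\right)=O\!\left(\frac{(d-1)^k k^4}{n}\right),
\]
so $\E_n[N_{prim}(k)-N_{simp}(k)]\le \E_n[W]/k=O\bigl((d-1)^k k^3/n\bigr)=o\bigl((d-1)^k/k\bigr)$ using $k^4\prec n$. Combined with the lower bound, this finishes the proof.

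I expect the main obstacle to be making the probability estimate of the last paragraph fully rigorous: one must verify that the event ``at least two cycle-closing steps'' is faithfully captured by the incremental construction — handling in particular the steps where the walk re-uses an already-revealed edge and so reveals nothing new — and that the conditional probability of a cycle-closing step is genuinely $O(k/n)$ uniformly over the revealed past, including near the end of the walk when many half-edges are already paired. The structural claim that $b_1\ge 2$ for primitive non-simple loops is clean but also needs minor care regarding self-loops and multi-edges in the configuration model.
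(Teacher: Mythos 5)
Your proposal is correct and follows essentially the same route as the paper: both reduce to showing that the expected number of primitive non-simple loops is negligible, via the same two ingredients — the structural fact that a primitive non-simple non-backtracking loop traces a subgraph with at least two independent cycles, and the $O(k^4/n^2)$ bound on the probability of two cycle-closing steps obtained by revealing the configuration-model pairings along the walk. The only differences are cosmetic (you phrase the structural claim via the first Betti number and organize the estimate as $\E_n[N_{prim}-N_{simp}]$ rather than as a direct computation of the loop probability $p_{prim}=p_s+p_{ns}$).
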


\begin{proof}

  We will compute the expected value as follows.  On any graph from $G(d,n)$, the total number of non-backtracking walks $\gamma$ of length $k$ equals
  \begin{align}
    \label{eq:nonback}
    nd(d-1)^{k-1},
  \end{align}
  since there are $n$ choices of starting vertex, then $d$ choices for the first outgoing edge, and $(d-1)$ choices for the succeeding outgoing edges (note that if the walk happens to be a loop, then the resulting loop could potentially backtrack at the starting vertex).

  We now consider choosing such a $\gamma$ randomly, i.e. we choose a graph randomly according to $G(d,n)$, pick a random start vertex, and then pick a random non-backtracking walk starting at that vertex.  Combined with \eqref{eq:nonback}, to prove the Proposition, it will suffice to compute the probability $p_{prim}$ that $\gamma$ is a loop that is primitive and non-backtracking.  
  
  We begin by showing that $\gamma$ forming multiple loops is unlikely.

  \medskip
  \begin{claim}
The probability that the induced graph formed by $\gamma$ has at least two loops is $O(k^4/n^2)$.
\end{claim}

    \begin{proof}
      This appears as \cite[Lemma 3]{BS}.  We proceed by building the random graph and the random walk at the same time.  If there are two loops it means that we had at least two ``free choices'' of an edge that came back to vertices already on the walk.  By free, we mean that we picked a half-edge to follow that was unpaired.  The probability that the half-edge that this gets paired to is incident to one of the vertices already in the walk is $O(k/(n-k))$ which, since we are assuming $k \prec n^{1/4}$, is $O(k/n)$.  There are $\binom{k}{2}$ choices for the two steps at which the collision occurs.  Thus the probability of interest is $O(k^2(k/n)^2)$, as desired.
    \end{proof}
          
    \begin{claim}
      Given a non-backtracking, primitive, non-simple loop $\gamma$ on a graph, its induced subgraph must contain at least two loops.
    \end{claim}
    \begin{proof}
      Let $k$ be the length of the loop.  Choose a starting point $v_1$, and let $\gamma$ traverse the vertices $v_1,\ldots,v_k,v_{k+1}
      =v_1$ in that order.  We can choose the starting point such that a simple loop $\alpha$ is formed by $v_1,v_2,\ldots,v_i=v_1$  for some $1<i\le k$ (we use both non-simple and non-backtracking properties here).  
      After $v_i$, the walk may follow this loop $\alpha$ several times, but since the loop $\gamma$ is primitive, eventually it must depart from $\alpha$, say at step $j$.  A new loop $\beta$ is then formed somewhere between steps $j$ and $k$.
    \end{proof}
    
    Now we will compute the probability that $\gamma$ is a simple loop.   By Proposition \ref{prop:e-simp-short}, the average number of simple loops tends to $(d-1)^k/k$.  This means that the average number of walks with a distinguished start vertex that form a simple loop tends to $(d-1)^k$.  Now recall that in \eqref{eq:nonback} above, we showed that on any $d$-regular graph, the number of length $k$ non-backtracking walks is exactly $nd(d-1)^{k-1}$.  These facts together mean that the probability that $\gamma$ is a simple loop is 
    \begin{align}
      p_{s}\sim \frac{(d-1)^k}{nd(d-1)^{k-1}} = \frac{1}{n} \frac{d-1}{d}.
      \label{eq:simple}
    \end{align}
    By the two Claims above, the probability that $\gamma$ is a primitive loop that is non-backtracking (including at start vertex) and \emph{non-simple} is
    \begin{align*}
            p_{ns}=O(k^4/n^2) = o(1/n)
    \end{align*}

    Combined with \eqref{eq:simple}, we get that the probability that $\gamma$ is a primitive, non-backtracking loop is
    \begin{align*}
      p_{prim}= p_s+p_{ns}\sim \frac{1}{n} \frac{d-1}{d} + o(1/n) \sim \frac{1}{n} \frac{d-1}{d}.
    \end{align*}

    We combine this with \eqref{eq:nonback} to compute the expected number of primitive, non-backtracking loops of length $k$, with a distinguished start-vertex to be 
    \begin{align*}
      nd(d-1)^{k-1} \cdot p_{prim} \sim nd(d-1)^{k-1} \cdot \frac{1}{n} \frac{d-1}{d} = (d-1)^k.
    \end{align*}

    There are $k$ choices of distinguished start point along the loop, so if we forget this, then the quantity goes down by a factor of $k$, giving the desired result. 
  \end{proof}
           
\subsection{Length $k\succ \log n$}

  \paragraph{Non-backtracking matrix.}
  
 Let $A$ be the $n\times n$ adjacency matrix of $d$-regular graph $\Gamma$.  Since $\Gamma$ is undirected, this graph is symmetric, so $A$ has $n$ real eigenvalues
  $$\lambda_1 \ge \lambda_2 \ge \cdots \ge \lambda_n.$$
  Since $\Gamma$ is $d$-regular, $\lambda_1=d$.  Let $\lambda = \max(|\lambda_2|,|\lambda_n|)$.  

  Powers of $A$ and associated spectral information can be used to understand counts of walks with backtracking allowed.  Since we are interested in non-backtracking walks, we consider the related Markov process whose state space is the set of directed edges of $A$. Two states $e_1,e_2$ are connected if $e_1,e_2$ are not opposite orientations of the same edge, and the forward endpoint of $e_1$ equals the back endpoint of $e_2$ (these conditions mean that going from $e_1$ to $e_2$ is a valid move in a non-backtracking walk).  We can think of this Markov process as a directed graph $\tilde \Gamma$ with $nd$ vertices, with each vertex having $d-1$ incoming edges and $d-1$ outgoing edges.  Observe that there is a bijection between non-backtracking loops on $\Gamma$ and all (directed) loops on $\tilde \Gamma$.

  To count non-backtracking loops, we will study the adjacency matrix $\tilde A$ of $\tilde \Gamma$.  We call $\tilde A$ the \emph{non-backtracking matrix} associated to the graph $\Gamma$.  
We denote by $\mu$ the magnitude of the second largest number in magnitude among the eigenvalues of   $\tilde A$ (listed with multiplicity).

The following lemma says that if $A$ has spectral gap, then so does $\tilde A$. 
\begin{lemma}
  \label{lem:nonback-spec-gap}
  Let $A$ be the adjacency matrix of any regular graph, and $\tilde A$ the corresponding non-backtracking matrix. For every $\epsilon > 0$, there is some $\delta > 0$ so that if 
  $$\lambda<d-\epsilon$$
for some $\epsilon>0$ then    $$\mu<(d-1)-\delta.$$

\end{lemma}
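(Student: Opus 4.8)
The plan is to reduce the lemma to an elementary study of a quadratic, via the classical relation expressing the spectrum of $\tilde A$ in terms of that of $A$. For a $d$-regular (multi)graph $\Gamma$ with $n$ vertices and $m = nd/2$ edges, the Ihara--Bass formula gives the factorization
\begin{align*}
  \det(xI - \tilde A) = (x^2-1)^{\,m-n}\,\prod_{i=1}^{n}\left(x^2 - \lambda_i x + (d-1)\right).
\end{align*}
Thus the eigenvalues of $\tilde A$ are: the numbers $\pm 1$, and, for each $i$, the two roots $\mu_i^{\pm} = \tfrac12\!\left(\lambda_i \pm \sqrt{\lambda_i^2 - 4(d-1)}\right)$ of $x^2 - \lambda_i x + (d-1)$. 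Since $\lambda_1 = d$ produces $\{\mu_1^+,\mu_1^-\} = \{d-1,\,1\}$, the value $d-1$ is an eigenvalue of $\tilde A$; and it is the spectral radius, since $\tilde A$ is a nonnegative matrix all of whose row sums equal $d-1$ (each directed edge of $\Gamma$ has exactly $d-1$ non-backtracking successors). So it remains to bound the absolute value of every \emph{other} eigenvalue away from $d-1$. I would either cite Ihara--Bass directly, or derive the displayed factorization from the block decomposition of $\tilde A$ in terms of the source and target incidence matrices of $\Gamma$; both routes are standard, and the first is cleaner.

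Next I would analyze $|\mu_i^{\pm}|$ as a function of $t := |\lambda_i|$. For $i \ge 2$ the hypothesis gives $t \le \lambda < d - \epsilon$. Because $\mu_i^{+}\mu_i^{-} = d-1$, when $t \le 2\sqrt{d-1}$ the two roots are complex conjugates, each of modulus $\sqrt{d-1}$; when $t > 2\sqrt{d-1}$ they are real, and the larger of them in absolute value equals $g(t) := \tfrac12\!\left(t + \sqrt{t^2 - 4(d-1)}\right)$. Define $h\colon[0,d]\to\R$ by $h(t) = \sqrt{d-1}$ for $t \le 2\sqrt{d-1}$ and $h(t) = g(t)$ for $t \ge 2\sqrt{d-1}$. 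Then $h$ is continuous and non-decreasing, strictly increasing on $[2\sqrt{d-1},\,d]$, with $h(d) = d-1$ (using $\sqrt{(d-2)^2} = d-2$) and $h(0) = \sqrt{d-1} > 1$ since $d \ge 3$. Hence every eigenvalue of $\tilde A$ other than $d-1$ has absolute value at most $\max\{1,\,h(\lambda)\} = h(\lambda) \le h(d-\epsilon)$, and $h(d-\epsilon) < h(d) = d-1$. Taking $\delta := \tfrac12\!\left((d-1) - h(d-\epsilon)\right)$, which is positive and depends only on $d$ and $\epsilon$, gives $\mu \le h(d-\epsilon) = (d-1) - 2\delta < (d-1) - \delta$, as desired. (Incidentally, $\lambda < d - \epsilon$ already forces $\Gamma$ to be connected and non-bipartite, so $d$ is a simple eigenvalue of $A$ and $d-1$ a simple eigenvalue of $\tilde A$, making ``second largest in magnitude'' unambiguous; but this is not needed for the bound.)

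I do not anticipate a genuine obstacle: the only real input is the eigenvalue correspondence, which holds for the multigraphs produced by the configuration model under the usual conventions, and everything else is one-variable calculus — the monotonicity of $g$ on $[2\sqrt{d-1},d]$ follows from $g'(t) = \tfrac12\bigl(1 + t/\sqrt{t^2-4(d-1)}\bigr) > 0$, and the endpoint value $g(d) = d-1$ is immediate. The only mild care needed is bookkeeping: keeping track of the trivial eigenvalues $\pm 1$ (harmless because $d \ge 3$ makes $\sqrt{d-1} > 1$) and checking that $h(d - \epsilon) < d-1$ regardless of whether $d - \epsilon$ exceeds $2\sqrt{d-1}$ — which it does since $h$ is non-decreasing on $[0,d]$, strictly increasing past $2\sqrt{d-1}$, and $d-\epsilon < d$.
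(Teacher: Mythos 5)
Your proposal is correct and follows essentially the same route as the paper: both use the Ihara--Bass eigenvalue correspondence $\mu_i^{\pm} = \tfrac12\bigl(\lambda_i \pm \sqrt{\lambda_i^2-4(d-1)}\bigr)$ together with $\pm 1$, split into the complex case (modulus $\sqrt{d-1}$) and the real case (bounded by the increasing function $f$, your $g$), and conclude by monotonicity of $f$ on $[2\sqrt{d-1},d]$. Your piecewise function $h$ is just a tidier packaging of the paper's bound $\mu \le \max\{\sqrt{d-1},\, f(d-\epsilon)\}$.
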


\begin{proof}
  Using Ihara's theorem, Glover and Kempton give the eigenvalues of $\tilde A$ in terms of those of $A$ as
  \begin{align*}
    \pm 1,\text{ } \mu^\pm_i:= \frac{\lambda_i \pm \sqrt{\lambda_i^2-4(d-1)}}{2}.  
  \end{align*}
  where the $\pm 1$ eigenvalues occur with multiplicity greater than 1 (and in fact $\tilde A$ is diagonalizable) \cite[Theorem 2.2]{GK}.  

  Note that $\mu^+_1=d-1$, since $\lambda_1=d$.  Let $f(x)= \frac{x + \sqrt{x^2-4(d-1)}}{2}$.

  \begin{claim}
 \begin{align*}
   \mu \le \max\left\{ \sqrt{d-1}, f(d-\epsilon) \right\}.
 \end{align*}
 (Note that if necessary, we can decrease $\epsilon$ so that $f(d-\epsilon)$ is real).  
\end{claim}

\begin{proof}
  
  If $|\lambda_i|< 2\sqrt{d-1},$ then both $\mu^\pm_i$ are non-real, and have magnitude equal to $\sqrt{d-1}$, which can be seen by multiplying by the conjugate.

  If $|\lambda_i| \ge 2\sqrt{d-1}$, and $i\ne 1$, then both $\mu^\pm_i$ are real.  In this case, if $\lambda_i\ge 0$, then
  \begin{align*}
    0\le \mu_i^- \le \mu_i^+ = f(\lambda_i) \le f(\lambda) \le f(d-\epsilon),
  \end{align*}
  where in the last two inequalities, we have used that $f$ is an increasing function on $[2\sqrt{d-1},\infty)$.
When $\lambda_i\le 0$, arguing similarly gives $ |\mu^\pm_i|\le f(d-\epsilon)$, completing the proof.  
\end{proof}

Applying this Claim we get
\begin{align*}
  \mu \le \max\left\{ \sqrt{d-1}, f(d-\epsilon) \right\} &\le  f(d) - \min\left\{f(d)-\sqrt{d-1}, \ f(d)-f(d-\epsilon)\right\} \\
  &=(d-1)- \min\left\{d-1-\sqrt{d-1}, \ f(d)-f(d-\epsilon)\right\}.
\end{align*}
Combining this with the fact that $f$ is a \emph{strictly} increasing function on $[2\sqrt{d-1},\infty)$ gives the existence of $\delta$ with the desired property.  
\end{proof}

\begin{lemma}
  \label{lem:qual-spec-gap}
    Let $\tilde A$ be the non-backtracking matrix for a graph from $G(d,n)$, and let $\mu$ be the largest magnitude among the eigenvalues of $\tilde A$ other than $\mu_1$.  Then there exists $\delta>0$ such that 
  \begin{align*}
    \lim_{n\to\infty} \P_n[\mu<(d-1)-\delta] = 1.
  \end{align*}
\end{lemma}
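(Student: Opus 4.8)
The plan is to deduce the spectral gap for the non-backtracking matrix $\tilde A$ from the spectral gap for the ordinary adjacency matrix $A$, which we already have as a black box via Lemma \ref{lem:nonback-spec-gap}. So the strategy is in two steps. First, invoke the known result that a random $d$-regular graph from $G(d,n)$ has a spectral gap for $A$ asymptotically almost surely: there is some $\epsilon>0$ (in fact any $\epsilon<d-2\sqrt{d-1}$ works, by Friedman's theorem, but we only need \emph{some} fixed $\epsilon$) such that $\P_n[\lambda < d-\epsilon]\to 1$ as $n\to\infty$. A weaker and older bound, e.g.\ $\lambda \le d - c$ for an explicit constant $c>0$ holding a.a.s., due to Broder--Shamir or Friedman--Kahn--Szemer\'edi, is more than enough here and is perhaps preferable to cite since we only need a qualitative gap. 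Second, apply Lemma \ref{lem:nonback-spec-gap}: on the event $\{\lambda < d-\epsilon\}$, that lemma produces a $\delta>0$ (depending only on $d$ and $\epsilon$, not on $n$) with $\mu < (d-1)-\delta$. Since $\{\lambda<d-\epsilon\}\subseteq\{\mu<(d-1)-\delta\}$ deterministically, we get $\P_n[\mu<(d-1)-\delta] \ge \P_n[\lambda<d-\epsilon]\to 1$, which is exactly the claim.

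Concretely, I would write: fix $\epsilon>0$ small enough that a random $d$-regular graph satisfies $\lambda<d-\epsilon$ a.a.s.\ (citing the relevant paper — Friedman, or Broder--Shamir for the crude version), and let $\delta=\delta(\epsilon)>0$ be the constant supplied by Lemma \ref{lem:nonback-spec-gap}. Then for every $n$,
\begin{align*}
  \P_n[\mu<(d-1)-\delta] \;\ge\; \P_n[\lambda<d-\epsilon] \;\xrightarrow{n\to\infty}\; 1,
\end{align*}
and since a probability is at most $1$, the limit equals $1$. One small bookkeeping point to address: $\mu$ is defined as the largest magnitude among eigenvalues of $\tilde A$ \emph{other than} $\mu_1$; one should note that $\mu_1 = \mu_1^+ = d-1$ is the Perron eigenvalue (coming from $\lambda_1=d$) and that the eigenvalue list in Lemma \ref{lem:nonback-spec-gap}'s proof accounts for all $nd$ eigenvalues of $\tilde A$ with multiplicity, so the bound $\mu<(d-1)-\delta$ derived there is genuinely a bound on the second-largest magnitude.

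The main obstacle is essentially non-existent at the level of this lemma — all the real work is hidden in the two inputs. The analytic content (passing a gap from $A$ to $\tilde A$) is Lemma \ref{lem:nonback-spec-gap}, already proved; the probabilistic content (that $A$ has a gap a.a.s.) is a deep but standard external theorem. So the only thing to be careful about is to cite an appropriate a.a.s.\ spectral-gap statement that applies to the configuration model $G(d,n)$ (some versions in the literature are stated for the uniform model, but they transfer via the contiguity/positive-probability-of-simplicity remark already noted in the paper, or one can cite a version proved directly for the configuration model). If one wanted to be self-contained one could instead cite the Kahn--Szemer\'edi-style second-eigenvalue bound, but for this qualitative statement any of these suffices.
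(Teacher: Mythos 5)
Your proposal is correct and follows essentially the same route as the paper: the paper likewise cites an a.a.s.\ spectral gap for the adjacency matrix of a random regular graph (there attributed to a theorem of Alon) and then applies Lemma \ref{lem:nonback-spec-gap} to transfer the gap to $\tilde A$. The only difference is the choice of reference for the external spectral-gap input, which is immaterial to the argument.
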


\begin{proof}
  
  Let $\lambda$ be the quantity for $A$ defined above.  By \cite[Theorem 4.2]{Alon}, there is some $\epsilon>0$ such that
    \begin{align*}
    \lim_{n\to\infty} \P_n[\lambda<d-\epsilon] = 1.
    \end{align*}
    Then applying Lemma \ref{lem:nonback-spec-gap} to such an $A$ gives a $\delta$ such that $\mu<(d-1)-\delta$.

\end{proof}

\begin{prop}
  \label{prop:aas-prim-long}
  Suppose $k$ is some function of $n$ satisfying $k\succ \log n$.  Fix $\epsilon>0$.  Then
  $$\P_n\left[  1-\epsilon < \frac{N_{prim}(k)}{(d-1)^k/k} < 1+\epsilon \right] \to 1,$$
  as $n\to\infty$.
\end{prop}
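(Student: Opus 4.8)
The plan is to pass to the non-backtracking matrix $\tilde A$ and exploit that $\tr(\tilde A^k)$ counts the closed walks of length $k$ on $\tilde\Gamma$, i.e.\@ the non-backtracking closed walks of length $k$ on $\Gamma$ with a marked starting edge. Every such walk is, for a unique $m\mid k$, the $(k/m)$-th power of a primitive non-backtracking loop of length $m$, and a primitive loop of length $m$ has exactly $m$ marked-start representatives of length $k$ (one per cyclic rotation); hence
\begin{align*}
  \tr(\tilde A^k)=\sum_{m\mid k} m\,N_{prim}(\Gamma,m).
\end{align*}
In particular $k\,N_{prim}(\Gamma,k)\le\tr(\tilde A^k)$, and
\begin{align*}
  k\,N_{prim}(\Gamma,k)=\tr(\tilde A^k)-\sum_{m\mid k,\, m<k} m\,N_{prim}(\Gamma,m),
\end{align*}
where the subtracted sum is nonnegative and, using the identity again together with $m\,N_{prim}(\Gamma,m)\le\tr(\tilde A^m)$, is at most $\sum_{m\le k/2}\tr(\tilde A^m)$.

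Next I would insert the spectral input. The eigenvalue $d-1$ of $\tilde A$ is simple (it corresponds to the simple Perron eigenvalue $\lambda_1=d$ of $A$, $\Gamma$ being a.a.s.\@ connected), so $\tr(\tilde A^m)=(d-1)^m+\sum_{i\ge2}\mu_i^m$ where $\mu_2,\dots,\mu_{nd}$ are the other eigenvalues, and $\bigl|\sum_{i\ge2}\mu_i^m\bigr|\le(nd-1)\max_{i\ge2}|\mu_i|^m$. By Lemma \ref{lem:qual-spec-gap} there is a fixed $\delta>0$ for which the event $E_n=\{\max_{i\ge2}|\mu_i|\le(d-1)-\delta\}$ has $\P_n[E_n]\to1$; it suffices to establish the claimed bound on $E_n$, so I work on $E_n$ from here on. Writing $\rho:=1-\delta/(d-1)\in(0,1)$, the error term above is then at most $nd\,(d-1)^m\rho^m$.

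Now the two bounds follow. The upper bound is immediate: $k\,N_{prim}(\Gamma,k)\le\tr(\tilde A^k)\le(d-1)^k(1+nd\,\rho^k)$. For the lower bound, $\sum_{m\le k/2}\tr(\tilde A^m)\le\sum_{m\le k/2}(d-1)^m(1+nd)=O\!\bigl(n(d-1)^{k/2}\bigr)$, so
\begin{align*}
  k\,N_{prim}(\Gamma,k)\ge\tr(\tilde A^k)-\sum_{m\le k/2}\tr(\tilde A^m)\ge(d-1)^k(1-nd\,\rho^k)-O\!\bigl(n(d-1)^{k/2}\bigr).
\end{align*}
Dividing through by $(d-1)^k$ yields $\bigl|\,k\,N_{prim}(\Gamma,k)/(d-1)^k-1\,\bigr|=O(nd\,\rho^k)+O\!\bigl(n(d-1)^{-k/2}\bigr)$, and each error tends to $0$ exactly because $k\succ\log n$: $nd\,\rho^k=\exp(\log n+\log d+k\log\rho)\to0$ since $\log\rho$ is a negative constant and $k/\log n\to\infty$, and $n(d-1)^{-k/2}\to0$ since $\log(d-1)>0$. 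Therefore, on $E_n$, the ratio $N_{prim}(\Gamma,k)\big/\bigl((d-1)^k/k\bigr)$ lies in $(1-\epsilon,1+\epsilon)$ for all large $n$, and since $\P_n[E_n]\to1$ this proves the proposition.

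The step I expect to be the main obstacle is arranging the error to be negligible on the right scale. The spectral gap only provides geometric decay $\rho^k$ relative to the Perron contribution $(d-1)^k$, but there are $nd-1$ sub-Perron eigenvalues, costing a factor of order $n$; the hypothesis $k\succ\log n$ is exactly the borderline at which $\rho^k$ beats $n$. For the same reason this spectral argument cannot by itself reach down to the $\sqrt n$ scale — the smaller-$k$ range is instead covered by Proposition \ref{prop:e-prim-short}. A minor additional point is the contribution of imprimitive loops in the lower bound, but these are dominated by $\tr(\tilde A^m)$ with $m\le k/2$ and so contribute only at the harmless scale $(d-1)^{k/2}$.
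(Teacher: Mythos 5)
Your argument is essentially the paper's own proof: the same trace identity $\tr(\tilde A^k)=\sum_{m\mid k}m\,N_{prim}(m)$, the same appeal to Lemma \ref{lem:qual-spec-gap} for the spectral gap event, and the same use of $k\succ\log n$ to kill the factor of $nd$ coming from the sub-Perron eigenvalues. The only (immaterial) difference is that you bound the imprimitive contribution by $\sum_{m\le k/2}\tr(\tilde A^m)=O\big(n(d-1)^{k/2}\big)$ via the spectral bound, whereas the paper uses the cruder universal bound $N_{prim}(r)\le nd^{r}$ to get $O(kn\,d^{k/2})$; both are $o\big((d-1)^k\big)$ in this regime.
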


\begin{proof}
  The non-backtracking matrix $\tilde A$ associated to a $d$-regular graph has dimensions $nd\times nd$.  Let $m=nd$.    
  By the same result used in proof of Lemma \ref{lem:nonback-spec-gap} $\tilde A$ is diagonalizable \cite[Theorem 2.2]{GK}.  Let
  $$\mu_1=d-1,\mu_2,\ldots, \mu_m,$$
  be the eigenvalues of $\tilde A$, listed with multiplicity, arranged in order of decreasing magnitude.  

  By Lemma \ref{lem:qual-spec-gap} there exists $\delta>0$ such that
  \begin{align}
    \label{eq:spec-gap}
    |\mu_i| < (d-1)-\delta, \ \text{ for } i=2,\ldots,m,
  \end{align}
  with probability tending to $1$ as $n\to\infty$.

Now note that the $j$th diagonal entry of $\tilde A^k$ counts the number of directed walks on $\tilde \Gamma$ that start and end at the $j$th vertex.  We denote by $N_{tr}(k)$ the number of non-backtracking walks on $\Gamma$ that start and end at the same vertex.  Then

\begin{align*}
  N_{tr}(k) = \tr (\tilde A^k) &= \mu_1^k + \mu_2^k + \cdots + \mu_m^k  \\
                            & = (d-1)^k + \mu_2^k + \cdots + \mu_m^k\\
                            & = (d-1)^k + O\left( |\mu_2|^k + \cdots + |\mu_m|^k\right) \\
                            & = (d-1)^k + O\left( m \cdot \max_{i\ge 2}|\mu_i|^k  \right).
\end{align*}
By the above spectral gap bound \eqref{eq:spec-gap} on the $\mu_i$, we get that, with probability tending to $1$ as $n\to\infty$, 
\begin{align*}
  N_{tr}(k) = (d-1)^k + O\left( m \left( (d-1)-\delta\right)^k\right).
\end{align*}
Recall that $m=nd$, so when $k\succ \log n$, the above gives
\begin{align}
  N_{tr}(k)= (1+o(1)) \cdot (d-1)^k \label{eq:ntr}
\end{align}
with probability tending to $1$ as $n\to\infty$.

Now we can express $N_{tr}(k)$ in terms of $N_{prim}$. Each loop counted by $N_{tr}$ has a period $r | k$, and there are $r$ distinct choices of starting edge. So we have:
\begin{align*}
  N_{tr}(k)=\sum_{r|k}r \cdot N_{prim}(r).  
\end{align*}
We isolate $N_{prim}(\Gamma,k)$ and use the immediate universal inequality $N_{prim}(\Gamma,r) \le n d^r$ holding for any $\Gamma,r,n$, together with \eqref{eq:ntr} to get, with probability tending to $1$ as $n\to\infty$, 
\begin{align*}
  k \cdot N_{prim}(k) &= N_{tr}(k)-\sum_{r|k, r\ne k} r\cdot N_{prim}(r) \\
                             &= N_{tr}(k)-O\left(\sum_{r|k, r\ne k} n \cdot d^r \right)\\
                           & = (1+o(1)) \cdot (d-1)^k- O(k \cdot n \cdot d^{k/2}) \\
                           &=(1+o(1)) \cdot (d-1)^k - o\left((d-1)^k\right),
\end{align*}
where we have used that $k\succ \log n$ to get the last line. 
The desired result follows.

\end{proof}

\begin{rmk}
  When $d\ge 5$, one can accurately bound the expectation of $N_{prim}(\Gamma,k)$ for $k\prec \log n$ using more quantitative information about the eigenvalues of random regular graphs (namely \cite[Theorem 3.1]{Friedman}).  But these methods do not seem to work for $d=3,4$.    
\end{rmk}

\section{Simple vs primitive loops}
\label{sec:simple-prim}

In this section we prove the main theorems by combining the results that we've proved about counting simple and primitive loops in the previous two sections.  

\subsection{Low length regime}
\label{sec:low-length-regime}

\begin{proof}[Proof of Theorem \ref{thm:graph-short}] 
  We will prove the theorem when the order of growth $k$ lies in one of three specific ranges:
  (1) $k$ constant, (2) $1\prec k\prec n^{1/4}$, (3) $\log n \prec k(n) \prec \sqrt n$.  In general, $k(n)$ need not stay in any of these three ranges; however the general case reduces to these.  In fact, to show that the desired limit of probabilities is $1$, it suffices to show that any subsequence $\{n_i\}$  has a further subsequence $\{n_{i_j}\}$ along which with the limit is $1$.  Given $\{n_i\}$, we can always find $\{n_{i_j}\}$ along which the growth of $k$ falls into one of the three cases.  Hence, by the below, the limit of the probabilities along $\{n_{i_j}\}$ will be $1$.

  \medskip
  \noindent \underline{Case 1:} $k$ is constant.
  \medskip

  Our proof uses (i)  $N_{simp}\le N_{prim}$, (ii) the integrality of $N_{simp},N_{prim}$, and (iii) our results on expectation of $N_{simp}, N_{prim}$, in particular that these converge to the same \emph{constant} value in this regime.  
  
  Since $N_{simp}\le N_{prim}$, and both are valued in non-negative integers, we have
  \begin{align*}
    \E_n[N_{prim}(k)] - \E_n[N_{simp}(k)] \ge \P_n[N_{prim}(k) \ne N_{simp}(k)] \cdot 1 . 
  \end{align*}
  By Proposition \ref{prop:e-simp-short} and Proposition \ref{prop:e-prim-short}, the quantities $\E[N_{prim}]$ and $\E[N_{simp}]$ both converge to the constant $(d-1)^k/k$, and hence the left hand side of the above tends to $0$ as $n\to \infty$.  It follows that
  $$\lim_{n\to\infty} \P_n[N_{simp}(k)\ne N_{prim}(k)] = 0.$$
So for any $\epsilon>0$, we get that
\begin{align*}
  \P_n\left[ N_{simp}(k)\ge (1-\epsilon) N_{prim}(k)\right] \ge \P_n[N_{simp}(k)= N_{prim}(k)] \to 1,
\end{align*}
as $n\to\infty$, as desired.  

\medskip
\noindent \underline{Case 2:} $1 \prec k(n) \prec n^{1/4}$
\medskip

In previous sections we have gained control over the expectation of $N_{simp}$ and $N_{prim}$ in this regime, showing that they have the same asymptotic behavior.  And of course we also have $N_{simp}\le N_{prim}$.  However, these facts alone are not enough to deduce the desired result.  For instance, we need to rule of the situation in which with probability $1/2$, $N_{simp}=1$ and $N_{prim}=2$, and with probability $1/2$, both $N_{simp}$ and $N_{prim}$ are around $2(d-1)^k/k$.  Note that in this case, $N_{simp},N_{prim} \sim (d-1)^k/k$ as $k\to\infty$, but there is a $1/2$ chance that the ratio is equal to $2$.  This type of situation will be ruled out by our control of the a.a.s.\@ behavior of $N_{simp}$ (which was proved by bounding the second moment $N_{simp}$).  

We begin by defining two random variables
\[
 X :=\frac{N_{simp}(\Gamma,k)}{(d-1)^k/k}, \quad Y:=\frac{N_{prim}(\Gamma,k)}{(d-1)^k/k}.
\]
Note that $X \le Y$ everywhere. Moreover, by our assumptions on $k(n)$, we have that $1 \prec k \prec n^{1/4}$. So $\E(X) \sim 1$ and $\E(Y) \sim 1$ by Propositions \ref{prop:e-simp-short} and \ref{prop:e-prim-short}. Thus, $\E(Y) - \E(X) = o(1)$. In particular, for any $\epsilon > 0$, for $n$ large enough
\[
 \E(Y) - \epsilon \le \E(X) \le \E(Y). 
\]

So by Lemma \ref{lem:e-comparison}, we get that 
\begin{align}
 \P_n( X \ge Y - \sqrt \epsilon) \ge 1- \sqrt \epsilon, \label{eq:XvY}
\end{align}
for $n$ large enough.

The above means that $X,Y$ are additively close with high probability, but the desired statement is about multiplicative closeness (which need not follow from additive closeness if both $X,Y$ are small).   We will achieve this by bounding $X$ (and hence $Y$) from below almost surely.  

By Proposition \ref{prop:aas-simple-short}, we have that $Y\ge X>1/2$ with probability at least $1-\epsilon$ for all $n$ large enough.   %
Hence
$$Y-\sqrt{\epsilon}>(1-2\sqrt\epsilon )Y$$
with probability at least $1-\epsilon$ for large $n$.

Combining this with \eqref{eq:XvY} gives that 
\begin{align*}
  \P_n[X\ge (1-2\sqrt\epsilon) Y ] \ge \P_n[X\ge Y-\sqrt\epsilon ] -\epsilon \ge 1-\sqrt\epsilon -\epsilon,
\end{align*}
when $n$ is sufficiently large.  This implies the desired result.

\bigskip\noindent\underline{Case 3:} $\log n \prec k(n) \prec \sqrt n$
\medskip

The proof in this case comes directly from our control of the a.a.s.\@ behavior of both $N_{simp},N_{prim}$ in this regime.  

Applying Proposition \ref{prop:aas-simple-short} and Proposition \ref{prop:aas-prim-long} gives that, for any $\epsilon>0$, with probability approaching $1$ as $n\to\infty$, we have both
\begin{align*}
  N_{simp} \ge (1-\epsilon) (d-1)^k/k,\\
  N_{prim} \le (1+\epsilon) (d-1)^k/k. 
\end{align*}

It follows that
$$N_{simp} \ge \frac{1-\epsilon}{1+\epsilon} \cdot N_{prim},$$
with probability approaching $1$ as $n\to\infty$.  
This implies the desired result.

\medskip

\end{proof}

The below is a version with additive error of the basic probability fact that if one random variable dominates another and they have the same expectation, then they are equal almost everywhere.  
\begin{lemma}
  \label{lem:e-comparison}
  Let $X,Y$ be random variables (on the same probability space) with $X\le Y$ everywhere.  Suppose that for some $\epsilon>0$,
  $$\E[X]\ge \E[Y]-\epsilon.$$
  Then
  $$\P[X\ge Y-\sqrt{\epsilon}]\ge 1-\sqrt{\epsilon}.$$
\end{lemma}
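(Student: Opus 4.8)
The plan is to apply Markov's inequality to the nonnegative random variable $Y - X$. First I would observe that $Y - X \ge 0$ everywhere by hypothesis, and that $\E[Y - X] = \E[Y] - \E[X] \le \epsilon$ by the assumed expectation bound. Now Markov's inequality applied to $Y - X$ with threshold $\sqrt{\epsilon}$ gives
\[
\P[Y - X \ge \sqrt{\epsilon}] \le \frac{\E[Y-X]}{\sqrt{\epsilon}} \le \frac{\epsilon}{\sqrt{\epsilon}} = \sqrt{\epsilon}.
\]
Taking complements, $\P[Y - X < \sqrt{\epsilon}] \ge 1 - \sqrt{\epsilon}$, and since $\{Y - X < \sqrt{\epsilon}\} \subseteq \{X \ge Y - \sqrt{\epsilon}\}$ (in fact they differ only on the measure-zero boundary, but the inclusion is all we need), we conclude $\P[X \ge Y - \sqrt{\epsilon}] \ge 1 - \sqrt{\epsilon}$, as desired.

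There is essentially no obstacle here: the only minor point to be careful about is that Markov's inequality requires the random variable to be nonnegative, which is exactly why the hypothesis $X \le Y$ everywhere is stated, and one should make sure $\E[Y-X]$ is well-defined, which follows from $\E[X], \E[Y]$ being finite (implicit in writing the inequality $\E[X] \ge \E[Y] - \epsilon$). One could alternatively phrase the final inclusion step as: the event $X < Y - \sqrt\epsilon$ is contained in the event $Y - X > \sqrt\epsilon$, so its probability is at most $\sqrt\epsilon$. Either way the argument is two lines once Markov is invoked.
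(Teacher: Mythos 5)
Your proof is correct and is essentially the same argument as the paper's: the paper's inline inequality $\E[X]\le \E[Y]-\sqrt{\epsilon}\,\P[X<Y-\sqrt{\epsilon}]$ is exactly Markov's inequality for the nonnegative variable $Y-X$, written out by hand. Nothing further is needed.
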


\begin{proof}
Using that $X\le Y$, we have 
  \begin{align*}
    \E[X] %
    \le \E[Y]  -\sqrt{\epsilon}\cdot \P\big[X< Y-\sqrt{\epsilon}\big].
  \end{align*}
Combining this with $\E[Y]-\epsilon \le \E[X]$, we get
\begin{align*}
  \E[Y]-\epsilon \le \E[Y]-\sqrt{\epsilon}\cdot \P\big[X< Y-\sqrt{\epsilon}\big],
\end{align*}
and hence $\P[X<Y-\sqrt\epsilon] < \sqrt\epsilon$.
\end{proof}

\subsection{High length regime}
\label{sec:high-length-regime}

\begin{proof}[Proof of Theorem \ref{thm:graph-long}]

  In this regime the only input we need is the expected count of simple loops and the a.a.s.\@ count of primitive loops.
  
  Let
  \begin{align*}
    X= \frac{N_{simp}(\Gamma,k)}{(d-1)^k/k}, \quad  Y= \frac{N_{prim}(\Gamma,k)}{(d-1)^k/k}.
  \end{align*}

 For the expected simple count, by Proposition \ref{prop:e-simp-long}, 
  \begin{align*}
    \lim_{n\to\infty} \E_n[X] =0. 
  \end{align*}

  We now can apply the first moment method; by Markov's inequality, we have
  \begin{align}
    \label{eq:simp-markov} 
    \P_n[X\ge \epsilon/2] \le \frac{\E_n[X]}{\epsilon/2} \to 0,
  \end{align}
  as $n\to\infty$.

  For the a.a.s.\@ primitive count, by Proposition \ref{prop:aas-prim-long}, we have
  \begin{align}
    \label{eq:prim-aas} 
    \lim_{n\to\infty} \P_n\left[  1-\epsilon < Y < 1+\epsilon \right] = 1. 
  \end{align}

Then
\begin{align*}
  \P_n\left[ N_{simp}(k) \ge \epsilon \cdot N_{prim}(k) \right] =\P_n\left[ X \ge \epsilon \cdot Y \right] \le \P_n[X \ge \epsilon/2] + \P_n[Y\le 1/2],
\end{align*}
and the above goes to $0$ as $n\to\infty$, by \eqref{eq:simp-markov} and \eqref{eq:prim-aas}. 
\end{proof}

 \bibliographystyle{alpha}
  \bibliography{sources}

\end{document}